\newtheorem{theorem}{\sc Theorem}[section]
\newtheorem{lemma}[theorem]{\sc Lemma}
\newtheorem{proposition}[theorem]{\sc Proposition}
\newtheorem{remark}[theorem]{\sc Remark}
\newtheorem*{thmA}{Theorem A}
\newtheorem*{thmB}{Theorem B}
\date{}
\title{On residually finite groups with Engel-like conditions}
\author{Raimundo Bastos}
\address{Departamento de Matem\'atica, Universidade de Brasilia, Bras\'ilia-DF, 70910-900 Brazil }
\email{bastos@mat.unb.br}
\thanks{Supported by CAPES-Brazil}
\subjclass[2010]{20F45; 20E26}
\keywords{Engel elements; residually finite groups}
\begin{document}
\begin{abstract} Let $m,n$ be positive integers. Suppose that $G$ is a residually finite group in which for every element $x \in G$ there exists a positive integer $q=q(x) \leqslant m$ such that $x^q$ is $n$-Engel. We show that $G$ is locally virtually nilpotent. Further, let $w$ be a multilinear commutator and $G$ a residually finite group in which for every product of at most $896$ $w$-values $x$ there exists a positive integer $q=q(x)$ dividing $m$ such that $x^q$ is $n$-Engel. Then $w(G)$ is locally virtually nilpotent. 
\end{abstract}

\maketitle

\section{Introduction}

Let $G$ be a group. Let $x,y \in G$ we then define $[x,{}_1y]=[x,y]$ and $[x,{}_{i+1}y]=[[x,{}_{i}y],y]$ for $i\geq1$, where. The group $G$ is called an {\em Engel group} if for every $x,y\in G$ there is a positive integer $n=n(x,y)$ such that $[x,{}_{n}y]=1$. An element $y\in G$ is called {\em (left) Engel} if for any $x\in G$ there is a positive integer $n=n(x)$ such that $[x,{}_{n}y]=1$. Similarly, we say that $y$ is {\em (left) $n$-Engel} if for any $x\in G$ we have $[x,{}_{n}y]=1$. The group $G$ is called a {\em $n$-Engel group} if $[x,{}_{n}y]=1$ for all $x,y \in G$.    

According to the solution of the Restricted Burnside Problem (Zelmanov, \cite{ze1,ze2}) every residually finite group of finite exponent is locally finite. Another interesting result in this context was due to Wilson \cite{w} which states that every $n$-Engel residually finite  group is locally nilpotent. 

In this work, we study residually finite groups in which some powers are $n$-Engel. In a certain way, our results can be viewed as generalizations of the above results.

\begin{thmA} \label{a} Let $m,n$ be positive integers. Suppose that $G$ is a residually finite group in which for every element $x \in G$ there exists a positive integer $q=q(x) \leqslant m$ such that $x^q$ is $n$-Engel. Then $G$ is locally virtually nilpotent.
\end{thmA}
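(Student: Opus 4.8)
The plan is to reduce to the finitely generated case and then produce a nilpotent subgroup of finite index. Since being virtually nilpotent is a property of finitely generated subgroups, and the hypothesis is inherited both by subgroups (if $x\in H\leqslant G$, then $x^{q(x)}$ remains $n$-Engel in $H$) and by quotients (images of $n$-Engel elements are $n$-Engel, and the bound $q(x)\leqslant m$ is preserved), I may assume $G=\langle g_1,\dots ,g_r\rangle$ is finitely generated and aim to show that $G$ is nilpotent-by-finite. Throughout set $L=\operatorname{lcm}(1,2,\dots ,m)$.

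First I would analyse the finite quotients and extract the ``virtually'' part using Zelmanov. An $n$-Engel element is in particular a left Engel element, so by Baer's theorem the image of every $n$-Engel element of $G$ in a finite quotient $\bar G=G/N$ lies in the Fitting subgroup $F(\bar G)$; since each element of $\bar G$ has a power of exponent at most $m$ that is $n$-Engel, $\bar G/F(\bar G)$ has exponent dividing $L$. Now let $D$ be the intersection, over all finite-index normal subgroups $N$, of the full preimage in $G$ of $F(G/N)$. Then $G/D$ embeds in the direct product of the finite groups $(G/N)/F(G/N)$, so $G/D$ is residually finite of exponent dividing $L$; being finitely generated, it is finite by the positive solution of the Restricted Burnside Problem. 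Thus $D$ has finite index in $G$, is finitely generated, and its image in every finite quotient of $G$ is nilpotent.

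It remains to prove that $D$ is nilpotent, and this is where Wilson's theorem enters. Let $R=\langle E\rangle$ be the subgroup generated by the set $E$ of genuine $n$-Engel elements of $G$; since conjugates of $n$-Engel elements are $n$-Engel, $R$ is normal, and by Baer's theorem $R\leqslant D$. Every finitely generated subgroup of $R$ is contained in a subgroup generated by finitely many $n$-Engel elements, and the method behind Wilson's theorem (passing to the associated Lie ring, which is generated by finitely many ad-nilpotent elements and satisfies an Engel identity, hence is nilpotent of bounded class) shows such a subgroup to be nilpotent; therefore $R$ is locally nilpotent. To finish I would show that $R$ has finite index, equivalently that $D/R$, which is finitely generated of exponent dividing $L$ because a power of each element of $G$ lies in $E\subseteq R$, is finite. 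Granting this, $R$ is finitely generated and locally nilpotent, hence nilpotent, and $G$ is nilpotent-by-finite.

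The main obstacle is exactly this last step: transferring the Engel condition from the finite quotients, where Baer only places the relevant elements inside a Fitting subgroup of unbounded nilpotency class, to an honest finite-index nilpotent subgroup. Concretely, the difficulty is to show that the finite-exponent quotient $D/R$ is locally finite; this calls for a second application of Zelmanov's theorem and therefore for establishing that $D/R$ is residually finite, that is, that $R$ is closed in the profinite topology of $D$ (equivalently, that powers of $n$-Engel elements remain boundedly Engel, so that the $n$-Engel elements genuinely control a finite-index subgroup). I expect this closure/residual-finiteness step, rather than the Lie-theoretic local nilpotency, to be the technical heart of the argument.
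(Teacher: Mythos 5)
Your reduction to finitely generated $G$, the construction of $D$ from the Fitting subgroups of finite quotients (via Baer's theorem), and the finiteness of $G/D$ are all correct, and your subgroup $R=\langle E\rangle$ is essentially the paper's key object (the paper takes $K$, the subgroup generated by the Engel elements of $G$ lying in a fixed finitely generated subgroup $H$). But the step you defer to the end is a genuine gap, and the route you propose for it would not work. You want $D/R$ finite and suggest proving that $D/R$ is residually finite, i.e.\ that $R$ is closed in the profinite topology of $D$; nothing in the hypotheses gives this, quotients of residually finite groups are in general not residually finite, and so the ``second application of Zelmanov'' in the form you envisage cannot be launched. The missing idea, which is how the paper closes exactly this gap, is to weaken residual finiteness to local gradedness: by the lemma of Longobardi, Maj and Smith (Lemma \ref{HP}), the quotient of a locally graded group by a locally nilpotent \emph{normal} subgroup is again locally graded. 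Hence, once $K$ is known to be locally nilpotent, $H/K$ is a locally graded group of exponent dividing $\operatorname{lcm}(1,\ldots,m)$, and Macedo\'nska's consequence of the Restricted Burnside Problem (Lemma \ref{loc.finite}) --- valid for locally graded, not merely residually finite, groups --- makes it locally finite, hence finite since it is finitely generated. No closure or residual-finiteness property of the quotient is ever needed.

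A secondary weak point: your justification that $R$ is locally nilpotent (``generated by finitely many ad-nilpotent elements and satisfies an Engel identity'') is not accurate as stated. Zelmanov's theorem (Theorem \ref{1}) requires every \emph{commutator} in the generators to be ad-nilpotent, and a commutator of $n$-Engel elements need not be $n$-Engel; likewise the associated Lie algebra carries no visible Engel identity. Both points are rescued only by invoking the hypothesis of Theorem A for \emph{arbitrary} elements: any commutator $h$ in the generators lies in $G$, so $h^q$ is $n$-Engel for some $q\leqslant m$ and $\tilde h$ is ad-nilpotent by Lazard's Lemma \ref{4}, while $G$ satisfies the law $[y,{}_n x,{}_n x^2,\ldots,{}_n x^m]\equiv 1$, so the Lie algebra is PI by Lemma \ref{2}. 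Even then, nilpotency of $L_p(H)$ only yields that the pro-$p$ completion is $p$-adic analytic (Theorem \ref{3}); one still needs Tits' alternative to obtain solubility, Gruenberg's theorem (through Lemma \ref{asce}) to identify the Engel elements with the Hirsch--Plotkin radical, Wilson's Lemma \ref{Wilson} to assemble the different primes, and Baer's theorem to see that the subgroup in question is residually-(finite nilpotent) in the first place. This is precisely the content of Proposition \ref{prop}, so this part of your plan is fixable, but it is considerably more than ``the method behind Wilson's theorem.''
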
 

In Theorem A it is essential the hypothesis of the boundedness of the powers of each element. It is well known that there are periodic residually finite groups which are not locally finite. Some examples with this property have been constructed in \cite{Aleshin,Gri,G,GS,S}. In particular, these groups cannot be locally virtually nilpotent.  

Recall that if $w$ is a group-word and $G$ is a group, then the verbal subgroup $w(G)$ of $G$ corresponding to the group-word $w$ is the subgroup generated by all $w$-values in $G$. A group-word $w$ is a {\it multilinear commutator} if it can be written as a multilinear Lie monomial. An important family of multilinear commutators consists of the lower central words $\gamma_k$, given by 
\[
\gamma_1=x_1,
\qquad
\gamma_k=[\gamma_{k-1},x_k]=[x_1,\ldots,x_k],
\quad
\text{for $k\ge 2$.}
\]

The corresponding verbal subgroups $\gamma_k(G)$ are the terms of the lower central series of $G$. When $k=2$ we write $G'$ rather than $\gamma_2(G)$. 

Another distinguished sequence of multilinear commutator words are the derived words $\delta_k$, on $2^k$ variables, which are defined recursively by
\[
\delta_0=x_1,
\quad
\delta_k=[\delta_{k-1}(x_1,\ldots,x_{2^{k-1}}),\delta_{k-1}(x_{2^{k-1}+1},\ldots,x_{2^k})].
\]

The verbal subgroup that corresponds to the word $\delta_k$ is the familiar $k$-th derived subgroup of $G$ usually denoted by $G^{(k)}$.

In \cite{BSTT} was proved that given positive integers $m,n$ and a multilinear commutator word $v$, if $G$ is a residually finite group in which all values of the word $w=v^m$ are $n$-Engel, then $w(G)$ is locally nilpotent. For more details concerning groups with $n$-Engel word-values see \cite{BS,BSTT,shu1,STT}. In \cite{shu3} Shumyatsky has showed that if $w$ is a multilinear commutator and $G$ is a residually finite group in which every product of at most $896$ $w$-values has order dividing $n$, then $w(G)$ is locally finite. In the present paper we establish the following related result.

\begin{thmB}\label{b} Let $m,n$ be positive integers and $w$ a multilinear commutator. Suppose that $G$ is a residually finite group in which for any product of at most $896$ $w$-values $x$ there exists a positive integer $q=q(x)$ dividing $m$ such that $x^q$ is $n$-Engel. Then $w(G)$ is locally virtually nilpotent.
\end{thmB}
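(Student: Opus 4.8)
The plan is to regard Theorem B as the $w$-verbal analogue of Theorem A and to prove it by running the reduction machinery of Shumyatsky \cite{shu3}---the source of the constant $896$---with Theorem A playing the role that the solution of the Restricted Burnside Problem \cite{ze1,ze2} plays there. Since $w(G)$ is generated by $w$-values and subgroups of virtually nilpotent groups are virtually nilpotent, it suffices to prove that every subgroup $H=\langle g_1,\ldots,g_s\rangle$ generated by finitely many $w$-values is virtually nilpotent; such an $H$ is residually finite. Working with $w$-values is what lets the hypothesis bite: because $w$ is a multilinear commutator, inverses and conjugates of $w$-values are again $w$-values, so $H$ contains all $g_i^{\pm h}$ with $h\in H$, and any product of at most $896$ of these is a product of at most $896$ $w$-values of $G$, to which the hypothesis applies.

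Next I would analyze the finite quotients $\bar H=H/M$. If $\bar x$ is the image of a product of at most $896$ $w$-values, then $\bar x^{q}$ is $n$-Engel for some $q\mid m$, and in a finite group an $n$-Engel element lies in the Fitting subgroup; hence $\bar x^{q}\in F(\bar H)$. Passing to $\widetilde H=\bar H/F(\bar H)$, every product of at most $896$ $w$-values has order dividing $m$, and since $\widetilde H$ is generated by $w$-values we have $\widetilde H=w(\widetilde H)$. At this point I would invoke the methods of \cite{shu3}: the bound on products of at most $896$ $w$-values forces the exponent of $w(\widetilde H)$ to be bounded in terms of $m$ alone; as $\widetilde H$ is $s$-generated, \cite{ze1,ze2} then bounds $|\,\bar H:F(\bar H)\,|$ by a constant $B=B(s,m)$, uniformly over all finite quotients. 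Intersecting the finitely many subgroups of $H$ of index at most $B$ produces a characteristic subgroup $K$ of finite index whose image in every finite quotient lies in the Fitting subgroup, so $K$ is finitely generated and residually-(finite nilpotent); it remains to prove that $K$ is nilpotent.

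The hard part will be exactly this last step---upgrading $K$ from residually nilpotent to nilpotent---where the Engel hypothesis, and not merely an exponent bound, must be used, precisely as in the proof of Theorem A and of Wilson's theorem \cite{w}. The natural route is Lie-theoretic: form the graded Lie ring associated with the lower central series of $K$ and show it is nilpotent by means of Zelmanov's theorem on Engel Lie algebras; since $K$ is finitely generated and residually nilpotent, nilpotency of the Lie ring yields nilpotency of $K$, and then $H$ is virtually nilpotent. The genuine difficulty, and the reason the constant $896$ reappears, is to extract a bounded Engel identity for this Lie ring from a hypothesis that only controls products of at most $896$ $w$-values: one must show that the Lie commutators relevant to the Engel condition can be realized through boundedly many $w$-values, so that the $n$-Engel condition on appropriate powers can be transferred to the Lie ring. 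This is the Engel counterpart of the exponent-bounding step of \cite{shu3}, and carrying it out is where the main work lies.
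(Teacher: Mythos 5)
There are two genuine gaps, one repairable and one fatal to the plan as set up. The repairable one is in your finite-quotient step: the generators $a_1,\ldots,a_s$ of $H$ are $w$-values of $G$, not of $H$, while $\bar H$ and $\widetilde H$ are quotients of $H$ only. Hence the images of the $a_i$ need not be $w$-values of $\widetilde H$, and your claim ``$\widetilde H=w(\widetilde H)$'' is unjustified. This matters quantitatively: what \cite{shu3} gives (after the standard direct-product compactness argument to make it uniform over finite groups) is that a subgroup generated by boundedly many $w$-values \emph{of the group itself} has $(s,m,w)$-bounded order; it does not give a bound on the exponent of $w(\widetilde H)$, and even granting such a bound it would not bound $|\widetilde H|$, since $\widetilde H$ may be larger than $w(\widetilde H)$. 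This defect can be fixed by running your Baer-type argument (which is itself fine: $n$-Engel elements of a finite group do lie in the Fitting subgroup) in finite quotients of $G$ rather than of $H$, because a homomorphism defined on all of $G$ does carry the $a_i$ to $w$-values of the image.

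The fatal gap is the last step. Your $K$ is an intersection of bounded-index subgroups of $H$; its elements are products of unboundedly many $w$-values, so the hypothesis of the theorem attaches no Engel condition to them at all. Consequently there is no way to make the generators of your graded Lie ring ad-nilpotent --- ad-nilpotency is obtained, via Lazard's Lemma \ref{4}, from knowing that a \emph{power of the corresponding group element} is $n$-Engel --- so Zelmanov's Theorem \ref{1} cannot be brought to bear on $L(K)$; and finite generation plus residual (finite nilpotence) alone is hopeless, since free groups have both properties. So what you call ``where the main work lies'' is not a technical step to be carried out: for this choice of $K$ it cannot be carried out. The paper's proof sidesteps both problems by a different decomposition. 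It takes $K$ to be the subgroup generated by the \emph{Engel elements of $G$} contained in $H$; local nilpotency of this $K$ is exactly what the Lie machinery proves (Proposition \ref{prop} applies Zelmanov to subgroups generated by conjugates of the elements $x^q$, each of which is $n$-Engel and hence ad-nilpotent, and Lemma \ref{asce} identifies the Engel elements with the Hirsch--Plotkin radical). Finiteness of $H/K$ is then obtained with no uniform bounds over finite quotients: by Lemma \ref{HP} the quotient by a locally nilpotent normal subgroup is locally graded, every $w$-value has its $m$-th power in the radical, and Proposition \ref{graded} (the locally graded extension of Shumyatsky's $896$-theorem) gives local finiteness, hence finiteness of $H/K$; finally $K$, being finitely generated and locally nilpotent, is nilpotent.
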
 

The above theorem is no longer valid if the assumption of residual finiteness of $G$ is dropped. In \cite{DK} Deryabina and Kozhevnikov showed that for any integer $k \geqslant 1$ there exists an integer $N = N(k) \geqslant 1$ such that for every odd number $n > N$ there is a group $G$ with commutator subgroup $G'$ is not locally finite and satisfying the identity $$f = ([x_1,x_2] \ldots [x_{2k-1},x_{2k}])^n \equiv~1.$$ In particular, $G'$ cannot be locally virtually nilpotent.

\section{Preliminaries}

We say that a set $X$ is  {\it commutator-closed} if $[x,y]\in X$ whenever $x,y\in X$.

\begin{lemma} \label{soluble}
Let $X$ be a normal commutator-closed subset of a group $G$. Assume that $G$ is generated by finitely many elements of $X$. If $G$ is soluble, then $G$ is finite.  
\end{lemma}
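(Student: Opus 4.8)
The plan is to argue by induction on the derived length $d$ of $G$, exploiting at each step that the elements of $X$ have finite order (this periodicity is what makes the conclusion possible; without it the pair $G=\mathbb{Z}$, $X=G$ would already be a counterexample). Write $G=\langle x_1,\dots,x_k\rangle$ with all $x_i\in X$. For the base case $d\le 1$ the group $G$ is abelian and generated by the finitely many torsion elements $x_1,\dots,x_k$; a finitely generated abelian group generated by elements of finite order is itself a finite torsion group, so $G$ is finite.

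For the inductive step assume $d\ge 2$ and that the statement holds for soluble groups of smaller derived length. First I would pass to the abelianization $G/G'$: it is finitely generated, abelian, and generated by the images of the $x_i$, which again have finite order, so $G/G'$ is finite. Consequently $G'$ has finite index in $G$ and is therefore finitely generated. The key point is now to transport the hypotheses to $G'$. Since $G=\langle x_1,\dots,x_k\rangle$, the subgroup $G'$ is the normal closure in $G$ of the commutators $[x_i,x_j]$; because $X$ is commutator-closed we have $[x_i,x_j]\in X$, and because $X$ is a normal subset every conjugate $[x_i,x_j]^{\gamma}$ also lies in $X$. Thus $G'$ is generated by a subset of $X\cap G'$, and as $G'$ is finitely generated we may select from this set a finite generating subset. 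One then checks routinely that $X\cap G'$ is a normal, commutator-closed subset of $G'$ consisting of elements of finite order; since $G'$ has derived length $d-1$, the induction hypothesis yields that $G'$ is finite. Finally, $G/G'$ and $G'$ are both finite, whence $G$ is finite.

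The main obstacle I anticipate is the transport step: verifying that $G'$ inherits a \emph{finite} generating set drawn from $X$. This rests on two facts that must be combined carefully — that $G'$ is finitely generated (which comes from the finiteness of $G/G'$, hence ultimately from the periodicity of the generators), and that $G'$ is generated as a group by the $G$-conjugates of the $[x_i,x_j]$, all of which the normal and commutator-closed hypotheses place inside $X$. Once finite generation of $G'$ is in hand, extracting a finite subset of these torsion generators is automatic, and the induction closes. An alternative, slightly softer route would be to use the transport step only to show that $G$ is periodic and then invoke the classical fact that a finitely generated soluble torsion group is finite; the bookkeeping is essentially the same.
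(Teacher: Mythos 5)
Your proof is correct and follows essentially the same route as the paper's: the paper argues by minimal counterexample on the derived length, observes that $G/G'$ is abelian and ``generated by finitely many elements of finite order'' (hence finite), deduces that $G'$ is finitely generated and, by the normality and commutator-closedness of $X$, generated by finitely many elements of $X$, and then concludes by minimality --- which is exactly your inductive step, phrased contrapositively. Your one genuine addition is the observation that the lemma as stated is false without a periodicity assumption (your example $G=\mathbb{Z}$, $X=G$ is decisive): the paper's own proof silently uses that the generators have finite order at the very first step, even though no such hypothesis appears in the statement of the lemma. Since the lemma is only ever applied to sets of $\delta_k$-values whose elements have order dividing $m$ (as in Lemma~\ref{grad}), the paper's applications remain sound, but the statement should be amended to require the elements of $X$ to have finite order, exactly as you did.
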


\begin{proof}
Let $G$ be a counter-example whose derived length is as small as possible. As the quotient $G/G'$ is an abelian group generated by finitely many elements of finite order, we have $G'$ is a finitely generated subgroup. Since $X$ is a normal commutator-closed subset of $G$, we conclude that $G'$ is also generated by finitely many elements of $X$. From this we see that $G'$ is finite, which completes the proof.  
\end{proof}

Recall that a group is {\it locally graded} if every non-trivial finitely generated subgroup has a proper subgroup of finite index. Interesting classes of groups (e.g. locally finite groups, locally nilpotent groups, residually finite groups) are locally graded.

It is clear that a quotient of a residually finite group need not be residually finite (see for instance \cite[6.19]{Rob}). In particular, this occurs also with locally graded groups. However, the next result gives a sufficient condition for a quotient to be locally graded.

\begin{lemma} (Longobardi, Maj, Smith, \cite{LMS}) \label{HP}
Let $G$ be a locally graded group and $N$ a normal locally nilpotent subgroup of $G$. Then $G/N$ is locally graded.
\end{lemma}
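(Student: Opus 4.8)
The plan is to verify local gradedness directly from the definition, which reduces the problem at once to a finitely generated situation. Let $KN/N$ be a nontrivial finitely generated subgroup of $G/N$, where $K=\langle x_1,\dots,x_r\rangle$ is a finitely generated subgroup of $G$. Being a subgroup of the locally graded group $G$, the group $K$ is itself locally graded, and $M:=K\cap N$ is a normal subgroup of $K$ which is locally nilpotent, since $M\leqslant N$. Moreover $KN/N\cong K/M$ is nontrivial, so $M\neq K$. Thus it suffices to prove the following statement: if $K$ is a finitely generated locally graded group and $M\trianglelefteq K$ is a proper locally nilpotent normal subgroup, then $K/M$ has a proper subgroup of finite index.

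It is convenient to enlarge $M$ to the Hirsch--Plotkin radical $R$ of $K$, the largest normal locally nilpotent subgroup; then $M\leqslant R$, and since $K/R$ is a quotient of $K/M$, it is enough to produce a proper finite-index subgroup of $K/R$. If $R=K$, then $K$ is a finitely generated locally nilpotent group, hence nilpotent, so $K/M$ is a nontrivial finitely generated nilpotent group and therefore possesses proper subgroups of finite index. We may therefore assume $R\neq K$ and try to exhibit a nontrivial finite quotient of $K/R$.

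Now I invoke local gradedness of $K$: it provides a proper finite-index subgroup, and replacing it by its normal core we obtain a proper normal subgroup $L\trianglelefteq K$ of finite index. The quotient $RL/L\cong R/(R\cap L)$ is a finite quotient of the locally nilpotent group $R$, hence nilpotent. If $RL\neq K$, then $RL/R$ is a proper subgroup of finite index of $K/R$, as desired. The remaining possibility, which I expect to be the main obstacle, is that $RL=K$ for \emph{every} proper finite-index normal subgroup $L$; then every finite quotient $K/L\cong R/(R\cap L)$ is nilpotent, so the image of $R$ is dense in the profinite topology of $K$ even though $R\neq K$. The plan to exclude this is a descent fed by local gradedness: each such $L$ is again finitely generated and locally graded, with $R\cap L$ locally nilpotent normal in $L$ and $L/(R\cap L)\cong RL/R=K/R$, so $L$ inherits exactly the same configuration. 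The aim is to drive this descent to show that $K$ must be nilpotent-by-finite; a finitely generated nilpotent-by-finite group is residually finite and has its Hirsch--Plotkin radical of finite index, so $K/R$ would then be finite and nontrivial and hence admit a proper subgroup of finite index, contradicting the standing assumption that $RL=K$ for all $L$ (equivalently, that $K/R$ has no proper subgroup of finite index). Converting ``all finite quotients nilpotent while the Hirsch--Plotkin radical is proper'' into this contradiction is the delicate point, and it is precisely the content of the argument of Longobardi, Maj and Smith.
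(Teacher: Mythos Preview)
The paper does not prove this lemma at all: it is stated with attribution to Longobardi, Maj and Smith \cite{LMS} and then used as a black box. So there is no ``paper's own proof'' to compare against; the relevant question is only whether your argument stands on its own.

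Your reduction is clean and correct: passing to a finitely generated $K$ with $K/M\cong KN/N$, enlarging $M$ to the Hirsch--Plotkin radical $R$ of $K$, disposing of the case $R=K$ via nilpotency of finitely generated locally nilpotent groups, and in the case $R\neq K$ isolating the obstruction ``$RL=K$ for every proper normal subgroup $L$ of finite index'' (equivalently, $K/R$ has no proper subgroup of finite index while every finite quotient of $K$ is nilpotent). All of that is sound.

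However, the proposal is not a proof: at the decisive point you write that converting ``all finite quotients nilpotent while the Hirsch--Plotkin radical is proper'' into a contradiction ``is precisely the content of the argument of Longobardi, Maj and Smith.'' That is an acknowledgment that the essential step has not been carried out. The descent you sketch (replace $K$ by $L$, observe that $L$ inherits the same configuration) does not by itself terminate or produce a bound; one needs an additional idea---in the original, an appeal to the structure of the finite residual together with the fact that a finitely generated locally graded group whose finite residual has finite index must have trivial finite residual---to force $K$ to be residually finite and hence (via the nilpotency of all finite quotients) to reach the contradiction. Until that step is written out, what you have is the paper's citation rephrased as a proof outline, not an independent argument.
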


Shumyatsky in \cite{shu3} has showed that if $w$ is a multilinear commutator and $G$ is a residually finite group in which for any product of at most $896$ $w$-values $x$ there exists a positive integer $q=q(x)$ dividing a fixed positive integer $m$ such that $x^q = 1$, then the verbal subgroup $w(G)$ is locally finite. Next, we extend this result to the class of locally graded groups. 

\begin{lemma}\label{grad} Let $k,m,n$ be positive integers. Suppose that $G$ is a locally graded group in which for every product of at most $896$ $\delta_k$-values $x$ there exists a positive integer $q=q(x)$ dividing $m$ such that $x^q = 1$. Then $G^{(k)}$ is locally finite.
\end{lemma}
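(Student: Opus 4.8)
The plan is to deduce the statement from the residually finite result of Shumyatsky recalled above (\cite{shu3}) and to bridge the passage from residual finiteness to local gradedness by means of the finite residual. As a preliminary structural remark I would record that the set $D$ of all $\delta_k$-values of $G$ is normal and commutator-closed: it is invariant under conjugation because $\delta_k$ is a word, and if $u,v\in D$ then $[u,v]$ is \emph{a priori} a $\delta_{k+1}$-value, while an easy induction on $k$ shows that every $\delta_{k+1}$-value is already a $\delta_k$-value (indeed, by induction $u$ and $v$ are even $\delta_{k-1}$-values, whence their commutator $[u,v]=\delta_k$ evaluated at suitable group elements is a $\delta_k$-value). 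This makes Lemma \ref{soluble} available as a potential closing device, since any finitely generated subgroup of $G^{(k)}$ that is generated by $\delta_k$-values and happens to be soluble is automatically finite.

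First I would reduce to the case that $G$ is finitely generated. A finitely generated subgroup of $G^{(k)}$ is contained in $L^{(k)}$ for a suitable finitely generated $L\leqslant G$, namely the subgroup generated by the finitely many entries occurring in the $\delta_k$-values involved. Since every $\delta_k$-value of $L$ is in particular a $\delta_k$-value of $G$, the group $L$ is again locally graded and inherits the hypothesis; so it is enough to show that $L^{(k)}$ is locally finite, and we may assume $G=L$ is finitely generated. Next I would pass to the finite residual $R$ of $G$, the intersection of all subgroups of finite index. The quotient $G/R$ is residually finite, and the hypothesis is preserved under the projection $G\to G/R$, because each $\delta_k$-value of $G/R$ is the image of one in $G$ and the condition $x^{q}=1$ with $q\mid m$ survives homomorphic images. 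Hence \cite{shu3} applies to $G/R$ and gives that $(G/R)^{(k)}=G^{(k)}R/R$ is locally finite.

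The crux, and the step I expect to be the main obstacle, is to transfer this local finiteness from the quotient back to $G$, that is, to prove that $R\cap G^{(k)}$ is locally finite. Granting this, $G^{(k)}$ is an extension of $R\cap G^{(k)}$ by $G^{(k)}/(R\cap G^{(k)})\cong (G/R)^{(k)}$, both locally finite, and since the class of locally finite groups is closed under extensions, $G^{(k)}$ is locally finite, as required.

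It is exactly here that local gradedness must be used in an essential way, beyond mere residual finiteness: a nontrivial finitely generated locally graded group always has a proper subgroup of finite index, whereas the finite residual of a finitely generated group admits none, so a finitely generated subgroup of $R$ is forced to collapse under finite approximation. Concretely, I would exploit that every finite quotient of every finitely generated $W\leqslant R\cap G^{(k)}$ still satisfies the hypothesis --- its $\delta_k$-values being images of $\delta_k$-values of $G$ --- so that all finite images of $W$ have $k$-th derived subgroup of exponent bounded solely in terms of $m$ and $k$; feeding this uniform bound into the principle that a locally graded group of finite exponent is locally finite (a consequence of Zelmanov's solution of the restricted Burnside problem, already implicit in \cite{shu3}) together with the finite-residual argument should force $W$ to be finite. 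Making this collapse rigorous --- ensuring that one controls $R\cap G^{(k)}$ on the nose, and not merely modulo a further residual --- is the delicate technical heart of the proof, and is precisely the place where the weaker hypothesis of local gradedness, rather than residual finiteness, is the only extra input one is allowed to use.
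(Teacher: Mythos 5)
Your reductions are fine as far as they go: restricting to a finitely generated $L\leqslant G$, passing to the finite residual $R$, noting that $G/R$ is residually finite and inherits the hypothesis, and invoking \cite{shu3} to conclude that $(G/R)^{(k)}$ is locally finite --- all of this is correct, and part of it does occur in the paper's proof. But your decomposition forces you to prove that $R\cap G^{(k)}$ is locally finite, and that step --- which you yourself flag as ``the delicate technical heart'' --- is not proved, and the ideas you sketch for it do not close the gap. Elements of $R\cap G^{(k)}$ are products of unboundedly many $\delta_k$-values, so the hypothesis gives no control on their orders; the uniform bound you invoke concerns only the $k$-th derived subgroups of the finite images of $W$, not $W$ or its finite images themselves, so the principle ``locally graded of finite exponent $\Rightarrow$ locally finite'' has nothing to act on. And even complete knowledge of all finite images of $W$ cannot force $W$ to be finite unless the quotient of $W$ by its own finite residual is known to be \emph{finite} (not merely locally finite or of bounded exponent) --- which is exactly the kind of statement your setup gives you no tool to produce.

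The paper sidesteps this obstacle by choosing a different unit of reduction: any finitely generated subgroup of $G^{(k)}$ lies in some $H=\langle h_1,\dots,h_s\rangle$ with each $h_i$ a $\delta_k$-value, and one proves that $H$ itself is finite. The pivot is Lemma \ref{soluble}, which you recorded in your preamble but never deployed where it matters: $H/H^{(k)}$ is soluble and is generated by finitely many elements of the normal commutator-closed set of (images of) $\delta_k$-values, all of order dividing $m$, so $H/H^{(k)}$ is \emph{finite}. When the finite residual of $H$ is trivial, $H$ is residually finite, so \cite{shu3} gives $H^{(k)}$ locally finite; finiteness of $H/H^{(k)}$ then makes $H^{(k)}$ finitely generated, hence finite, hence $H$ finite. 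When the finite residual $R_H$ of $H$ is nontrivial, the same argument applied to the residually finite group $H/R_H$ shows $H/R_H$ is finite; then $R_H$ is finitely generated, and local gradedness hands you a proper finite-index subgroup of $R_H$, hence of $H$, contradicting the definition of $R_H$. It is precisely this finiteness (rather than local finiteness) of the quotient by the finite residual that makes the residual finitely generated and lets local gradedness bite; your proposal never reaches a statement of that strength, which is why it cannot be completed along the lines you describe.
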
 

\begin{proof} 
Denote by $X$ the set of all $\delta_k$-values in $G$. Choose arbitrarily a finitely generated subgroup $V$ of $G^{(k)}$. There are finitely many $\delta_k$-values $h_1,\dots,h_s$ such that $$V \leqslant H = \langle h_1,\dots,h_s \rangle.$$ It suffices to prove that $H$ is finite. Let $R$ be the finite residual of $H$, i.e., the intersection of all subgroups of finite index in $H$. If $R=1$, then $H$ is residually finite. By \cite[Theorem 4.1]{shu3}, $H^{(k)}$ is locally finite. According to Lemma \ref{soluble} the quotient $H/H^{(k)}$ is finite and so $H^{(k)}$ is finitely generated. Consequently, $H$ is finite. So, we can assume that $R\neq1$. Since $H/R$ is residually finite, similarly we obtain that $H/R$ is finite. Hence $R$ is finitely generated. As $R$ is locally graded we have $R$ contains a proper subgroup of finite index in $H$, which gives a contradiction. 
\end{proof}

We need the following result, due to Shumyatsky \cite{shu3}. 

\begin{lemma} \label{soluble.w}
Let $w$ be a multilinear commutator, $G$ a soluble group in which all $w$-values have finite order. Then the verbal subgroup $w(G)$ is locally finite.
\end{lemma}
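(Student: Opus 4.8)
The plan is to reduce everything to showing that $w(G)$ is periodic: once that is known, $w(G)$ is a periodic soluble group, and such groups are locally finite (a finitely generated periodic soluble group is finite, by an easy induction on the derived length). It is also harmless to assume $G$ is finitely generated, since any finitely generated subgroup of $w(G)$ lies in $\langle g_1,\dots,g_s\rangle$ for finitely many $w$-values $g_i=w(a_{i1},\dots,a_{ik})$, and I may replace $G$ by the finitely generated subgroup $\langle a_{ij}\rangle$, in which the $g_i$ are still $w$-values of finite order. So I would fix $G$ finitely generated soluble and aim to prove that $w(G)$ is periodic.

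The engine is the case $w=\delta_k$, which falls directly to Lemma~\ref{soluble}. The set $X$ of $\delta_k$-values is normal (a conjugate of a $\delta_k$-value is again one) and, crucially, commutator-closed: the commutator of two $\delta_k$-values is a $\delta_{k+1}$-value, and a short induction on the defining recursion shows that every $\delta_j$-value is a $\delta_{j-1}$-value (the base case being that every group element is a $\delta_0$-value). Hence for finitely many $\delta_k$-values $g_1,\dots,g_s$, each of finite order, the subgroup $H=\langle g_1,\dots,g_s\rangle$ is generated by finitely many elements of the normal commutator-closed, torsion set $X\cap H$, so Lemma~\ref{soluble} yields that $H$ is finite. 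Thus $G^{(k)}=\delta_k(G)$ is locally finite, in particular periodic.

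For a general multilinear commutator $w$ of weight $k$, I would induct on the derived length $d$ of $G$, using the standard containments $G^{(c)}\le w(G)\le\gamma_k(G)$ for a suitable $c=c(w)$. Writing $A=G^{(d-1)}$ for the last nontrivial (abelian, normal) term, the quotient $G/A$ has derived length $d-1$ and still has all $w$-values of finite order, so by induction $w(G)A/A=w(G/A)$ is periodic; since an extension of a periodic group by a periodic group is periodic, it suffices to show that $w(G)\cap A$ is periodic. Passing to the quotient by the torsion subgroup of $A$ (a characteristic subgroup of the abelian group $A$), I may assume $A$ is torsion-free and must prove $w(G)\cap A=1$. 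The key point is that, when $d-1$ is large relative to $w$, every $\delta_{d-1}$-value is itself a $w$-value: overlaying the bracketing tree of $w$ on top of the complete binary tree of $\delta_{d-1}$ expresses $\delta_{d-1}(\mathbf{x})=w(y_1,\dots,y_k)$ with each $y_i$ a $\delta$-value, hence a genuine $w$-value lying in $A$. As $w$-values have finite order and $A$ is torsion-free, all such values are trivial, and since they generate $A=\delta_{d-1}(G)$ we get $A=1$, completing the step.

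The main obstacle is precisely this propagation of finiteness of order into the bottom abelian layer for general $w$, that is, the control of $w(G)\cap G^{(d-1)}$. The embedding of $\delta$-values into $w$-values only functions once the derived length exceeds the height of $w$, so the genuine content concentrates in the finitely many base derived lengths bounded in terms of $w$; there one cannot avoid a direct commutator-calculus argument, expressing the relevant generators of the abelian bottom term as products of $w$-values that already lie inside that term, so that torsion is preserved by abelianness and then killed in the torsion-free quotient. Getting this bookkeeping right—matching the weight and height of $w$ against the position in the derived series—is where the real work lies, while the reduction to periodicity and the $\delta_k$-case via Lemma~\ref{soluble} are comparatively soft.
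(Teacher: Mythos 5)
Your proposal does not close the key case, and you effectively say so yourself. The soft parts are fine: the reduction to periodicity, the reduction to finitely generated $G$, and the case $w=\delta_k$ via Lemma \ref{soluble} are all correct (and you rightly treat Lemma \ref{soluble} as requiring the generating set to consist of elements of finite order, a hypothesis the paper's statement of that lemma omits but its proof uses). The problem is the inductive step over the derived length: you carry it out only when $d-1$ is at least the height of $w$, so that every $\delta_{d-1}$-value is a $w$-value and the bottom term $A=G^{(d-1)}$ dies in the torsion-free quotient. In the complementary case you appeal to an unspecified ``direct commutator-calculus argument'' whose content would be that $w(G)\cap A$ is generated by $w$-values already lying in $A$. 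That is not bookkeeping; it is the entire content of the lemma. For a general multilinear commutator the set of $w$-values is not commutator-closed (unlike the set of $\delta_k$-values), and there is no evident reason why $w(G)\cap A$ should be generated by $w$-values inside $A$: for instance, take $w=\gamma_4$ and $G$ of derived length $3$, so $A=G''$ and $w(G)\cap A=G''$ (since $[\gamma_2,\gamma_2]\leqslant\gamma_4$); then $G''$ is generated by $\delta_2$-values, and a $\delta_2$-value $[[a,b],[c,d]]$ is not visibly a $\gamma_4$-value, nor visibly a product of $\gamma_4$-values contained in $G''$.

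The regime you leave open is, moreover, exactly the one that matters here. The paper itself gives no proof of this lemma --- it quotes it from Shumyatsky \cite{shu3} --- and in its one application (Proposition \ref{graded}) the lemma is invoked for the quotient $G/G^{(k)}$, whose derived length is at most $k$, the weight of $w$; that is, always in your unproved case and never in the case you did prove. So your argument establishes only the portion of the statement that the paper never needs, and defers the needed portion to precisely the ``real work'' that the citation to \cite{shu3} exists to supply. To repair the proof you would have to produce the actual argument for small derived length (as in \cite{shu3}), not merely the embedding of $\delta$-values into $w$-values.
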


This next result will be needed in the proof of Theorem B. 

\begin{proposition}\label{graded} Let $m$ be a positive integer and $w$ a multilinear commutator. Suppose that $G$ is a locally graded group in which for every product of at most $896$ $w$-values $x$ there exists a positive integer $q=q(x)$ dividing $m$ such that $x^q = 1$. Then $w(G)$ is locally finite.
\end{proposition}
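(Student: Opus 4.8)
The plan is to reduce the general multilinear commutator $w$ to the derived words $\delta_k$, apply Lemma \ref{grad} to control $G^{(k)}$, and then handle separately the locally finite kernel $G^{(k)}$ and the soluble quotient $G/G^{(k)}$. First I would use the well-known fact that for any multilinear commutator word $w$ there is a positive integer $k$, depending only on the bracket structure of $w$, such that every $\delta_k$-value in every group is a $w$-value; this is obtained by a straightforward induction on the height of the commutator tree of $w$, substituting suitably nested commutators for the variables of $w$. Fix such a $k$. Since every $\delta_k$-value is a $w$-value, any product of at most $896$ $\delta_k$-values is a product of at most $896$ $w$-values, so the hypothesis passes to $\delta_k$-values: for every product $x$ of at most $896$ $\delta_k$-values there is $q=q(x)$ dividing $m$ with $x^q=1$. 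As $G$ is locally graded, Lemma \ref{grad} then yields that $G^{(k)}$ is locally finite.

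Next I would pass to the quotient $\bar G = G/G^{(k)}$. This group is soluble of derived length at most $k$, and every $w$-value $\bar x$ of $\bar G$ is the image of a $w$-value of $G$, so by hypothesis (applied to a single $w$-value) $\bar x^{\,q}=1$ for some $q$ dividing $m$; in particular all $w$-values of $\bar G$ have finite order. By Lemma \ref{soluble.w}, the verbal subgroup $w(\bar G)$ is locally finite. Since $w(\bar G)=w(G)G^{(k)}/G^{(k)}\cong w(G)/\bigl(w(G)\cap G^{(k)}\bigr)$, this shows that $w(G)/\bigl(w(G)\cap G^{(k)}\bigr)$ is locally finite.

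Finally I would assemble the two pieces. The subgroup $N=w(G)\cap G^{(k)}$ is normal in $w(G)$ and is contained in the locally finite group $G^{(k)}$, hence is itself locally finite; and the quotient $w(G)/N$ is locally finite by the previous step. As the class of locally finite groups is closed under extensions, it follows that $w(G)$ is locally finite, as required. The main obstacle is the first reduction: one must verify that the passage from $w$-values to $\delta_k$-values, together with the choice of $k$, is valid so that Lemma \ref{grad} applies, and that the power condition is genuinely inherited by products of at most $896$ $\delta_k$-values. Once $G^{(k)}$ has been shown to be locally finite, the remaining steps are routine.
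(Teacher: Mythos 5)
Your proof is correct and follows essentially the same route as the paper: reduce to $\delta_k$-values via the fact that every $\delta_k$-value is a $w$-value (this is Lemma 4.2 of \cite{shu3}, which you re-derive rather than cite), apply Lemma \ref{grad} to conclude that $G^{(k)}$ is locally finite, then apply Lemma \ref{soluble.w} to the soluble quotient $G/G^{(k)}$ and finish by an extension argument. Your treatment of $N=w(G)\cap G^{(k)}$ is slightly more explicit than the paper's, which implicitly uses that $G^{(k)}\leqslant w(G)$ (immediate since $\delta_k$-values are $w$-values), but the argument is the same.
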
 

\begin{proof} By Lemma 4.2 of \cite{shu3} we have a positive integer $k$ such that every $\delta_k$-value in $G$ is a $w$-value. Therefore $G^{(k)}$ is locally finite by Lemma \ref{grad}. Further, the group $G$ satisfying the identity $$f = w^m \equiv~1.$$ In particular, every $w$-value in $G$ has finite order. It follows that the quotient $w(G)/G^{(k)}$ is locally finite (Lemma \ref{soluble.w}). Thus $w(G)$ is locally finite, as required.
\end{proof}

\section{Associated Lie algebras}

Let $L$ be a Lie algebra over a field $\mathbb{K}$.
We use the left normed notation: thus if
$l_1,l_2,\dots,l_n$ are elements of $L$, then
$$[l_1,l_2,\dots,l_n]=[\dots[[l_1,l_2],l_3],\dots,l_n].$$
We recall that an element $a\in L$ is called {\it ad-nilpotent} if there exists a positive integer $n$ such that $[x,{}_na]=0$ for all $x\in L$. When $n$ is the least integer with the above property then we say that $a$ is ad-nilpotent of index $n$. 

Let $X\subseteq L$ be any subset of $L$. By a commutator of elements in $X$,
we mean any element of $L$ that could be obtained from
elements of $X$ by means of repeated operation of
commutation with an arbitrary system of brackets
including the elements of $X$. Denote by $F$ the free
Lie algebra over $\mathbb{K}$ on countably many free
generators $x_1,x_2,\dots$. Let $f=f(x_1,x_2,
\dots,x_n)$ be a non-zero element of $F$. The algebra
$L$ is said to satisfy the identity $f=0$ if
$f(l_1,l_2,\dots,l_n)=0$ for any $l_1,l_2,\dots,l_n
\in L$. In this case we say that $L$ is PI. Now, we recall an important theorem of Zelmanov 
\cite[Theorem 3]{zelm} that has had many applications to the Group Theory.
\begin {theorem}\label{1} 
Let $L$ be a Lie algebra generated by $a_1,a_2,\dots,a_m$. Assume that $L$ is PI and that each commutator in the generators is ad-nilpotent. Then $L$ is nilpotent.
\end{theorem}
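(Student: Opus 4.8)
The plan is to follow Zelmanov's structural approach, which is precisely what makes this theorem one of the central technical engines behind the positive solution of the Restricted Burnside Problem. Since the conclusion is purely the nilpotency of $L$, I would first carry out the usual reductions: extend the ground field if necessary and replace $L$ by the relatively free Lie algebra on $m$ generators subject to the given PI identity together with the ad-nilpotency relations on all commutators in the generators. Working in this universal object is what allows one to argue by means of multihomogeneous components and successive linearizations, and it suffices to prove that this relatively free algebra is nilpotent.

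The core of the argument is the theory of \emph{sandwiches}. Recall that an element $c\in L$ is a sandwich if $[x,c,c]=0$ and $[x,c,y,c]=0$ for all $x,y\in L$; equivalently $\mathrm{ad}(c)^2=0$ and $\mathrm{ad}(c)\,\mathrm{ad}(x)\,\mathrm{ad}(c)=0$ for every $x$. The strategy rests on two deep facts. First, the \emph{sandwich theorem} of Kostrikin and Zelmanov: a Lie algebra generated by finitely many sandwiches is nilpotent. Second, and this is where the hypotheses are actually used, one shows that every nonzero homomorphic image of $L$ contains a nonzero sandwich. The production of sandwiches is the heart of the matter: one analyses the multilinear components of the defining PI identity together with the ad-nilpotency of the generators, and forces suitable iterated commutators of the ad-nilpotent generators to act as sandwich operators modulo commutators of lower weight.

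Granting these two facts, I would conclude by an induction on the sandwich ideal. Let $S$ denote the ideal of $L$ generated by all of its sandwiches; by the sandwich theorem $S$ is (locally) nilpotent, and the quotient $L/S$ plainly inherits the hypotheses of the theorem, so a well-founded induction along the resulting sandwich series yields the nilpotency of $L$. The main obstacle, by a wide margin, is the combinatorial heart just described: both the sandwich theorem and the systematic construction of sandwiches out of a PI identity and ad-nilpotent generators are highly nontrivial, and in Zelmanov's treatment they are handled through the associated graded superalgebra and the structure theory of PI Lie (super)algebras. Everything outside this core is formal bookkeeping, so the genuine content of a complete proof lies entirely in establishing the existence of sandwiches and their nilpotency.
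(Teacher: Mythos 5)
The first thing to say is that the paper contains no proof of this statement at all: it is quoted verbatim as Zelmanov's theorem (\cite[Theorem 3]{zelm}) and used throughout as a black box. So there is no argument in the paper to compare yours against; the only options for this statement are to cite it, as the author does, or to actually reproduce Zelmanov's argument. Your proposal correctly identifies the skeleton of Zelmanov's published proof --- reduction to a relatively free algebra, the sandwich notion ($[x,c,c]=0$ and $[x,c,y,c]=0$), the Kostrikin--Zelmanov sandwich theorem, and the production of nonzero sandwiches in nonzero homomorphic images --- so as a \emph{roadmap} it is accurate. But it cannot be accepted as a proof: both pillars are invoked without proof, and, as you yourself concede, they constitute essentially the entire mathematical content of the theorem. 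A proof attempt whose every nontrivial step is outsourced to ``deep facts'' is a citation in disguise, and at that point one may as well cite the theorem itself, as the paper does.

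There is also a concrete error in the one step you do carry out yourself, the closing induction. From the two pillars you obtain an ascending (possibly transfinite) series of ideals of $L$ whose factors are locally nilpotent and whose union is $L$; you claim this ``plainly'' yields nilpotency. It does not, even granting everything else: an extension of a locally nilpotent Lie algebra by a locally nilpotent one need not be locally nilpotent (the two-dimensional nonabelian algebra $\langle x,h \mid [h,x]=x\rangle$ is an extension of one abelian algebra by another, yet is not nilpotent), and more generally a finitely generated algebra admitting such a series need not be nilpotent --- exactly as a finitely generated hyper-(locally nilpotent) group need not be nilpotent. So the descent along the sandwich series is not a formal induction; in Zelmanov's actual treatment the passage from sandwiches to nilpotency exploits the fully invariant structure of the sandwich ideal in the relatively free algebra and further structure theory, and this is an additional genuine ingredient your outline is missing, not mere bookkeeping.
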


\begin{center}
 {\it On Lie Algebras Associated with Groups}
\end{center}

Let $G$ be a group and $p$ a prime. Let us denote by $D_i=D_i(G)$ 
the $i$-th dimension subgroup of $G$ in characteristic
$p$. These subgroups form a central series of $G$
known as the {\it Zassenhaus-Jennings-Lazard series} (see \cite[Section 2]{shu2} for more details). Set $L(G)=\bigoplus D_i/D_{i+1}$. 
Then $L(G)$ can naturally be viewed as a Lie algebra 
over the field ${\mathbb F}_p$ with $p$ elements. 

The subalgebra of $L$ generated by $D_1/D_2$ will be 
denoted by $L_p(G)$. The nilpotency of $L_p(G)$ has strong influence in the structure of a finitely generated group $G$. The following result is due to Lazard \cite{l2}.

\begin{theorem}\label{3} 
Let $G$ be a finitely generated pro-$p$
group. If $L_p(G)$ is nilpotent, then $G$ is
$p$-adic analytic.
\end{theorem}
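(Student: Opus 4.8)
The plan is to reduce the statement to the characterization of $p$-adic analytic pro-$p$ groups as those of finite rank, and to extract finite rank from the nilpotency of $L_p(G)$ by a dimension count on the homogeneous components of the Zassenhaus--Jennings--Lazard series. First I would record that, since $G$ is a finitely generated pro-$p$ group, the quotient $D_1/D_2=G/\Phi(G)$ is a finite-dimensional $\mathbb{F}_p$-vector space, say of dimension $d$, and that for such groups $L(G)$ is generated as a restricted Lie algebra by its degree-one component $D_1/D_2$, so that $L(G)=L_p(G)$. Let $M$ be the ordinary (non-restricted) Lie subalgebra of $L_p(G)$ generated by $D_1/D_2$. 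Since $L_p(G)$ is assumed nilpotent, of class $c$ say, $M$ is a nilpotent Lie algebra of class at most $c$ generated by $d$ elements, hence finite-dimensional; write $N=\dim_{\mathbb{F}_p}M$ and note that $M=\bigoplus_{w=1}^{c}M_w$ is graded with $M_w\subseteq D_w/D_{w+1}$.

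Next I would analyze the effect of the restricted $p$-operation. Using $\operatorname{ad}(x^{[p]})=(\operatorname{ad} x)^p$ together with the fact that iterated brackets of elements of $M$ remain inside $M$, one checks that every bracket involving $p$-power images lands again in $M$; consequently $L_p(G)=\sum_{k\ge 0}\langle M^{[p^k]}\rangle$ as an $\mathbb{F}_p$-space, and (using that over $\mathbb{F}_p$ the $p$-operation is additive modulo brackets) each summand spans a subspace of dimension bounded in terms of $N$. Since the $p$-operation raises degree from $i$ to $pi$, the component $M_w$ contributes to degree $wp^k$ under $[p]^k$, whence the degree-$n$ part of $L_p(G)=L(G)$ satisfies
\[
\dim_{\mathbb{F}_p} D_n/D_{n+1}\;\le\;\sum_{\substack{1\le w\le c\\ wp^k=n}}\dim_{\mathbb{F}_p}M_w .
\]
The number of admissible pairs $(w,k)$ with $wp^k=n$ and $w\le c$ is at most $\log_p c+1$, so one obtains $\dim_{\mathbb{F}_p} D_n/D_{n+1}\le(\log_p c+1)\,N$ for every $n$; thus the homogeneous components of the associated graded algebra have uniformly bounded $\mathbb{F}_p$-dimension.

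Finally I would invoke Lazard's theory: for a finitely generated pro-$p$ group, uniform boundedness of $\dim_{\mathbb{F}_p} D_n/D_{n+1}$ is equivalent to $G$ having finite rank, and finite-rank pro-$p$ groups are exactly the $p$-adic analytic ones, which gives the conclusion. This last equivalence --- passing from the filtered and graded data to a genuine $\mathbb{Z}_p$-Lie lattice and an analytic manifold structure, through the completed group algebra and the $p$-adic logarithm --- is where the real depth lies, and it is the step I expect to be the main obstacle; it is precisely the content of Lazard's work supplied by the cited reference \cite{l2}. I would also have to be careful with the bookkeeping for the restricted structure, since (as the example $G=\mathbb{Z}_p$ shows, where $L_p(G)$ is abelian but infinite-dimensional) nilpotency of $L_p(G)$ does not force it to be finite-dimensional but only bounded in each degree, so the argument must genuinely exploit the grading rather than rely on finite-dimensionality.
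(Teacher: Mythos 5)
Your argument breaks at the final step, and the break is a genuine one: the equivalence you invoke there is false. You claim that for a finitely generated pro-$p$ group, uniform boundedness of $\dim_{\mathbb{F}_p} D_n/D_{n+1}$ (``finite width'') is equivalent to finite rank, hence to $p$-adic analyticity. Only one direction holds: finite rank bounds the rank of every section, so analytic groups do have finite width. The converse fails. For example, the closure of the Grigorchuk group \cite{Gri} in the automorphism group of the rooted binary tree is a finitely generated pro-$2$ group whose Zassenhaus--Jennings--Lazard quotients $D_n/D_{n+1}$ have uniformly bounded dimension (this is the ``finite width'' property established by Bartholdi and Grigorchuk), yet it is an infinite torsion group and so cannot be $p$-adic analytic, since an infinite analytic pro-$p$ group contains an open uniform subgroup and hence a copy of $\mathbb{Z}_p$. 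So the step ``bounded width $\Rightarrow$ finite rank'' is not a deep fact waiting in Lazard's memoir \cite{l2}; it is simply untrue, and no bookkeeping on the grading can rescue a reduction to that statement.

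The irony is that your intermediate computation already proves the stronger fact you actually need, and you then discard it. You show (correctly, modulo the Jacobson-formula bookkeeping) that $L(G)$ is spanned by the iterated $p$-th powers $e_j^{[p^k]}$, $1\le j\le N$, $k\ge 0$, of a basis of the finite-dimensional algebra $M$. This finite generation of $\mathrm{gr}(G)$ under the $p$-operation --- not mere bounded width --- is the correct input for Lazard's criterion: by Quillen's theorem the graded ring of $\mathbb{F}_p[[G]]$ with respect to powers of the augmentation ideal $I$ is the restricted enveloping algebra $u(L(G))$, and restricted PBW shows that the number of degree-$n$ monomials in the $e_j^{[p^k]}$ (exponents $<p$) is polynomial in $n$, since for each $j$ the exponents $a_{jk}$ assemble into the base-$p$ expansion of a single integer $m_j$ with $\sum_j m_j\deg(e_j)=n$. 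Thus $\dim_{\mathbb{F}_p}\mathbb{F}_p[[G]]/I^n$ grows polynomially, which is one of Lazard's equivalent forms of analyticity; bounded width, by contrast, still allows $u(L(G))$ to have intermediate, partition-type growth, which is exactly what happens in the Grigorchuk example. Note also that the paper itself gives no proof of this theorem --- it is quoted directly from Lazard \cite{l2} --- so a proof attempt is judged precisely on whether it reduces the hypothesis to a true form of Lazard's criterion, which yours does not. Finally, a notational slip: in this paper $L_p(G)$ denotes the \emph{ordinary} Lie subalgebra of $L(G)$ generated by $D_1/D_2$, so your assertion $L(G)=L_p(G)$ is wrong (for $G=\mathbb{Z}_p$ the former is infinite-dimensional, the latter one-dimensional, as your own closing remark implicitly concedes); your $M$ is exactly the paper's $L_p(G)$, so this confusion is harmless to the structure of the argument but should be cleaned up.
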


Let $x\in G$, and let $i=i(x)$ be the largest integer such
that $x\in D_i$. We denote by ${\tilde x}$ the 
element $xD_{i+1}\in L(G)$. Now, we can state one condition for ${\tilde x}$ to be ad-nilpotent.

\begin{lemma} \label{4}(Lazard, \cite[page 131]{la}) 
For any $x\in G$ we have
$(ad\,{\tilde x})^q=ad\,(\widetilde {x^q})$.
\end{lemma}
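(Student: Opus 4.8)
The plan is to realize $L(G)$ inside the associated graded of the group algebra and to reduce the identity to the classical Frobenius relation in characteristic $p$. First I would recall that the Zassenhaus--Jennings--Lazard series arises from the filtration of the group algebra $\mathbb{F}_p G$ by the powers of its augmentation ideal $I$, namely $D_i = G \cap (1+I^i)$. Writing $A = \mathrm{gr}(\mathbb{F}_pG) = \bigoplus_{i \geq 0} I^i/I^{i+1}$ for the associated graded (a graded associative $\mathbb{F}_p$-algebra), the assignment $xD_{i+1} \mapsto (x-1)+I^{i+1}$ for $x \in D_i \setminus D_{i+1}$ embeds $L(G)$ as a Lie subalgebra of $A$ under the commutator bracket $[a,b]=ab-ba$; in particular $\mathrm{ad}\,\tilde x$ is realized as the associative commutator operator $a \mapsto \tilde x a - a\tilde x$, and the restricted $p$-operation on $L(G)$ is realized as the associative $p$-th power.

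Two computations then drive the proof, both performed inside $A$. The first is the elementary identity $x^p - 1 = (x-1)^p$ in $\mathbb{F}_pG$, valid because every intermediate binomial coefficient $\binom{p}{j}$ with $0<j<p$ vanishes modulo $p$; passing to $A$ this shows that $\widetilde{x^p}$ (the image of $x^p$ in degree $p\cdot i(x)$) is exactly the associative $p$-th power of $\tilde x$. The second is the Frobenius relation: writing $\mathrm{ad}\,a = L_a - R_a$ as the difference of the commuting left- and right-multiplication operators, characteristic $p$ gives
\[
(\mathrm{ad}\,a)^p = (L_a-R_a)^p = L_a^p - R_a^p = L_{a^p} - R_{a^p} = \mathrm{ad}(a^p).
\]
Taking $a = \tilde x$ and combining the two identities yields $(\mathrm{ad}\,\tilde x)^p = \mathrm{ad}(\widetilde{x^p})$, the case $q=p$. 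The case $q=p^k$ then follows by induction: applying the base case to the element $x^p$ gives $(\mathrm{ad}\,\tilde x)^{p^k} = \big((\mathrm{ad}\,\tilde x)^p\big)^{p^{k-1}} = (\mathrm{ad}\,\widetilde{x^p})^{p^{k-1}} = \mathrm{ad}(\widetilde{x^{p^k}})$.

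The step I expect to demand the most care is the honest bookkeeping behind the embedding $L(G)\hookrightarrow A$: one must verify that the bracket induced by group commutators agrees with the associative commutator in $A$, and one must fix the convention that $\widetilde{x^q}$ denotes the image of $x^q$ in the graded component of degree $q\cdot i(x)$, i.e. the restricted power of $\tilde x$. With this reading there is no degree mismatch and the identity holds for $q$ a power of $p$, which is the form needed to transfer ad-nilpotency through Theorem~\ref{1}; note that for $q$ coprime to $p$ the equality genuinely fails, so the prime-power instances are the operative ones. Since the statement is exactly Lazard's, I would in the end cite \cite{la} for these verifications, using the argument above as the conceptual skeleton.
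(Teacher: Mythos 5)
Your argument is correct, but there is nothing in the paper to compare it against: Lemma \ref{4} is stated as a quoted result of Lazard \cite{la}, and the paper supplies no proof of it. What you give is the standard derivation --- identify $D_i$ with $G\cap(1+I^i)$ where $I$ is the augmentation ideal of $\mathbb{F}_pG$, embed $L(G)$ into $\mathrm{gr}(\mathbb{F}_pG)$, compute $(x-1)^p=x^p-1$ inside the commutative subalgebra $\mathbb{F}_p[x]$, and finish with the Frobenius identity $(L_a-R_a)^p=L_{a^p}-R_{a^p}$ for the commuting left and right multiplication operators --- and each step is sound, including the induction to $q=p^k$. Your two caveats are also exactly right, and they are the substantive points. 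The convention that $\widetilde{x^q}$ means the component of $x^q-1$ in degree $q\cdot i(x)$ (i.e.\ the restricted power of $\tilde x$) is what makes the two sides of the identity homogeneous of the same degree; and the identity genuinely fails when $q$ is prime to $p$, since then $\widetilde{x^q}=q\tilde x$, so $\mathrm{ad}(\widetilde{x^q})$ raises degree by $i(x)$ while $(\mathrm{ad}\,\tilde x)^q$ raises degree by $q\,i(x)$. This is not a pedantic point for this paper: Proposition \ref{prop} invokes Lemma \ref{4} with an arbitrary positive integer $q\leqslant m$, and that invocation is legitimate only via the reduction indicated in the Remark following the lemma --- write $q=p^sr$ with $r$ prime to $p$, apply the $p$-power case to the element $x^r$ (so that $(\mathrm{ad}\,\widetilde{x^r})^{p^s}=\mathrm{ad}(\widetilde{x^q})$), and absorb the scalar using $\widetilde{x^r}=r\tilde x$ and the invertibility of $r$ in $\mathbb{F}_p$. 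So your proof, confined as it is to $p$-powers, captures precisely the true content of the lemma, and the bridge to the way the paper actually uses it is the paper's own Remark rather than the lemma as literally stated.
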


\begin{remark}
We note that $q$ in Lemma \ref{4} does not need to be a 
$p$-power. In fact, is easy to see that if $p^s$ is the 
maximal $p$-power dividing $q$, then $\tilde x$ is 
ad-nilpotent of index at most $p^s$.
\end{remark}

The following result is an immediate corollary of \cite[Theorem 1]{wize}.
\begin{lemma}\label{2}
Let $G$ be any group satisfying a group-law. Then $L(G)$ is $PI$.
\end{lemma}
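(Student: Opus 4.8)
The plan is to derive the statement directly from the theorem of Wilson and Zelmanov cited as \cite[Theorem 1]{wize}; the mathematical substance is entirely contained there, and the task reduces to matching that theorem to the construction set up in this section. First I would recall its precise content: to each non-trivial group word $w$ and each prime $p$ it attaches a fixed non-zero element $\lambda = \lambda(w,p)$ of the free Lie algebra over $\mathbb{F}_p$, with the property that if a group $G$ satisfies the law $w \equiv 1$, then the Lie algebra associated with the dimension-subgroup filtration of $G$ in characteristic $p$ satisfies the identity $\lambda = 0$. The essential feature is that $\lambda$ depends only on $w$ and $p$, never on $G$.

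Second, I would identify this associated Lie algebra with the $L(G) = \bigoplus_i D_i/D_{i+1}$ of the present section. This is immediate: the $D_i$ here are, by definition, the terms of the Zassenhaus-Jennings-Lazard series, that is, the dimension subgroups of $G$ in characteristic $p$, and the bracket on $\bigoplus_i D_i/D_{i+1}$ induced by the group commutator is the one used in \cite{wize}. Hence the hypothesis ``$G$ satisfies a group-law'' is exactly the hypothesis $w \equiv 1$ of \cite[Theorem 1]{wize}, and its conclusion that $L(G)$ satisfies the non-trivial identity $\lambda = 0$ is, by the definition of PI recalled above, precisely the assertion that $L(G)$ is PI.

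The only thing requiring care — and hence the step I would flag as the main obstacle — is to make sure the two frameworks genuinely coincide and that the result is being used for an arbitrary group $G$. I would therefore verify (i) that the filtration and ground field match those in \cite{wize}, and (ii) that no hidden restriction on $G$ (such as being pro-$p$ or finitely generated) is needed; for (ii) the decisive point is again that the produced identity $\lambda$ is fixed once $w$ and $p$ are, so that it holds in $L(G)$ for every group $G$ obeying $w \equiv 1$. I would also note, only for orientation, the conceptual reason the implication holds at all: in the associated graded algebra group commutators descend to Lie brackets on leading terms, so a group identity can be processed into a Lie relation, the delicate part being to guarantee that this relation is a \emph{non-zero} element of the free Lie algebra. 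That non-vanishing is precisely the hard content supplied by \cite[Theorem 1]{wize}, which I would invoke rather than reprove.
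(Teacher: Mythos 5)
Your proposal is correct and coincides with the paper's own treatment: the paper also obtains this lemma as an immediate corollary of \cite[Theorem 1]{wize}, the Wilson--Zelmanov theorem producing, for each non-trivial word $w$ and prime $p$, a fixed non-zero Lie identity satisfied by the algebra associated with the dimension-subgroup filtration of any group obeying $w\equiv 1$. Your additional verification that the filtration, ground field, and absence of hypotheses on $G$ match the setting of \cite{wize} is exactly the (routine) matching the paper leaves implicit.
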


\section{Proofs of the main results}

We denote by $\mathcal{N}$ the class of all finite nilpotent groups. The following result is a straightforward corollary of \cite[Lemma 2.1]{w}.

\begin{lemma}\label{Wilson}
Let $G$ be a finitely generated residually-$\mathcal{N}$ group. For each prime $p$, let $R_p$ be the intersection of all normal subgroups of $G$ of finite $p$-power index. If $G/R_p$ is nilpotent for each $p$, then $G$ is nilpotent.
\end{lemma}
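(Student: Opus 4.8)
The plan is to analyse the lower central factors $L_i=\gamma_i(G)/\gamma_{i+1}(G)$ and to show that $L_i=0$ for all sufficiently large $i$; this suffices. Indeed, since $G$ is residually-$\mathcal{N}$ it is in particular residually nilpotent: every kernel $N$ with $G/N$ finite nilpotent of class $c_N$ satisfies $N\supseteq\gamma_{c_N+1}(G)\supseteq\bigcap_j\gamma_j(G)$, and intersecting over all such $N$ together with residual-$\mathcal{N}$-ness ($\bigcap_N N=1$) gives $\bigcap_j\gamma_j(G)=1$. Hence if $L_c=0$ for some $c$, then $\gamma_{c+1}(G)=\gamma_{c+2}(G)=\cdots=\bigcap_j\gamma_j(G)=1$ and $G$ is nilpotent of class at most $c$. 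Each $L_i$ is a finitely generated abelian group (as $G$ is finitely generated, $L_1=G/G'$ is finitely generated and the Lie ring $L(G)=\bigoplus_i L_i$ is generated by $L_1$), so write $L_i\cong\mathbb{Z}^{r_i}\oplus T_i$ with $T_i$ finite.

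First I would extract information from the hypothesis one prime at a time. Fix a prime $p$ and let $c_p$ be the nilpotency class of $G/R_p$; then $\gamma_i(G)\leqslant R_p$ for every $i>c_p$. I claim this forces $L_i\otimes\mathbb{Z}_p=0$ for $i>c_p$, equivalently $r_i=0$ and $p\nmid|T_i|$. To see this one passes to the finitely generated nilpotent group $G/\gamma_{i+1}(G)$, in which $L_i$ is a central subgroup: if $L_i\otimes\mathbb{Z}_p\neq0$ then, after factoring out the $p'$-torsion, this group is residually a finite $p$-group and the image of $\gamma_i(G)$ survives in some finite $p$-quotient, contradicting $\gamma_i(G)\leqslant R_p$. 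Applying this with a single fixed prime already yields an index $i_0$ with $r_i=0$, hence $L_i=T_i$ finite, for all $i\geqslant i_0$.

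The remaining point is that only finitely many primes can occur among the finite groups $T_i$, and here finite generation enters decisively. Since $L(G)$ is generated in degree $1$, the bracket induces a surjection $L_1\otimes_{\mathbb{Z}}L_i\twoheadrightarrow L_{i+1}$; once $L_i$ is a torsion group this gives $\exp(L_{i+1})\mid\exp(L_i)$. Thus for $i\geqslant i_0$ all exponents $\exp(L_i)$ divide the fixed integer $E=\exp(L_{i_0})$, so every prime dividing some $T_i$ with $i\geqslant i_0$ lies in the finite set of prime divisors of $E$. For each such prime $q$ the previous paragraph gives $q\nmid|T_i|$ for $i>c_q$; choosing $i$ larger than $i_0$ and than all the finitely many relevant $c_q$ forces $L_i=T_i=0$, and the reduction of the first paragraph completes the proof. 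The main obstacle is the delicate step of the second paragraph—deducing $L_i\otimes\mathbb{Z}_p=0$ from $\gamma_i(G)\leqslant R_p$, that is, controlling the torsion-free rank and the $p$-torsion of a central layer from its image in the pro-$p$ completion—which is precisely the residual-finiteness content supplied by Wilson's Lemma~2.1.
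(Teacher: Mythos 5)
Your proof is correct, but it is worth pointing out that the paper does not actually prove this lemma at all: it is introduced with the sentence ``The following result is a straightforward corollary of \cite[Lemma 2.1]{w}'', so the paper's ``proof'' is a citation of Wilson, and what you have produced is a complete, self-contained argument of essentially the kind that underlies Wilson's lemma itself. Your structure is sound at every step: residual nilpotence ($\bigcap_j\gamma_j(G)=1$) reduces nilpotency of $G$ to the vanishing of a single lower central factor $L_i=\gamma_i(G)/\gamma_{i+1}(G)$ (note the harmless indexing slip: $L_c=0$ gives $\gamma_c(G)=\gamma_{c+1}(G)=\cdots=1$, so class at most $c-1$); the $L_i$ are finitely generated abelian since the associated graded Lie ring is generated in degree one; the hypothesis at one fixed prime kills the torsion-free ranks for all $i>c_p$; the commutator-induced surjections $L_1\otimes_{\mathbb{Z}}L_i\twoheadrightarrow L_{i+1}$ give $\exp(L_{i+1})\mid\exp(L_i)$ once the factors are torsion, confining all torsion primes to the divisors of $\exp(L_{i_0})$; and the hypothesis at those finitely many primes then forces $L_i=0$ for large $i$. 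The one place where you lean on a genuinely nontrivial classical fact is the second paragraph: that a finitely generated nilpotent group whose torsion subgroup is a $p$-group is residually a finite $p$-group (so that finite $p$-quotients of $G/\gamma_{i+1}(G)$ detect both infinite order and $p$-torsion in the central layer $L_i$, contradicting $\gamma_i(G)\leqslant R_p$). This is a theorem going back to Gruenberg and should be cited explicitly rather than asserted; with that citation in place the argument is complete. In comparison, your route buys self-containedness and makes visible exactly where finite generation and each prime enter, at the cost of length; the paper's route buys brevity but hides precisely the argument you supply.
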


In any group $G$ there exists a unique maximal normal locally nilpotent
subgroup (called the Hirsch-Plotkin radical) containing all normal locally nilpotent subgroups of $G$ \cite[12.1.3]{Rob}. We denote by $HP(G)$ the Hirsch-Plotkin radical of the group $G$. It is well know that for any group G, all elements in $HP(G)$ are Engel. But, it is also know that the set of all Engel elements of the group $G$ can differ from $HP(G)$ (cf. \cite{G}). According to Gruenberg \cite[12.3.3]{Rob} the Hirsch-Plotkin radical of a soluble group is precisely the set of all Engel elements. The next result is taken from \cite{BSTT}.

\begin{lemma}\label{asce} 
Let $G$ be a group with an ascending normal series whose factors are locally soluble. Then the set of all Engel elements of $G$ coincides with $HP(G)$.
\end{lemma}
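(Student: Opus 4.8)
The plan is to prove the two inclusions separately. One direction is immediate: as recalled above, in every group all elements of $HP(G)$ are left Engel, so $HP(G)$ is contained in the set $L(G)$ of left Engel elements of $G$. The whole content is therefore the reverse inclusion $L(G)\subseteq HP(G)$, and here the strategy is to exploit the given ascending normal series $1=G_0\trianglelefteq G_1\trianglelefteq\cdots\trianglelefteq G_\kappa=G$, with each factor $G_{\alpha+1}/G_\alpha$ locally soluble, by transfinite induction on its length $\kappa$, using Gruenberg's theorem for soluble groups (as quoted above) as the engine.

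The base case is $G$ locally soluble. Here the key point is that a finitely generated subgroup $F$ containing a fixed $y\in L(G)$ is soluble, that $y$ remains left Engel in $F$, and that by Gruenberg $L(F)=HP(F)$ is a normal locally nilpotent subgroup of $F$. Running this over the finitely generated subgroups of the normal closure $\langle y^G\rangle$ (each of which embeds into such an $F$ together with the relevant conjugates of $y$) shows that $\langle y^G\rangle$ is locally nilpotent, whence $y\in HP(G)$. For the inductive step I would treat limit and successor ordinals differently. At a limit $\kappa$ one has $G=\bigcup_{\mu<\kappa}G_\mu$, so any $y\in L(G)$ already lies in some $G_\mu$; since $y$ is left Engel in $G_\mu$ and $G_\mu$ carries a strictly shorter series, the induction gives $y\in HP(G_\mu)$, and because $HP(G_\mu)$ is characteristic in the normal subgroup $G_\mu$ it is a normal locally nilpotent subgroup of $G$, so $y\in HP(G)$. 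At a successor $\kappa=\lambda+1$ I would peel off the bottom term $A=G_1$, which is locally soluble and normal: the quotient $G/A$ carries a shorter series, so by induction $L(G/A)=HP(G/A)$, and the image of $y$ lies in $HP(G/A)=B/A$ for a normal subgroup $A\le B\trianglelefteq G$ with $B/A$ locally nilpotent. It then suffices to show $y\in HP(B)$, after which $HP(B)$, being characteristic in $B\trianglelefteq G$ and locally nilpotent, lands inside $HP(G)$.

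The hard part is exactly this last reduction, namely the case of a group $B$ possessing a locally soluble normal subgroup $A$ with locally nilpotent quotient $B/A$. In this situation finitely generated subgroups of $B$ need not be soluble, since their intersection with $A$ need not be finitely generated, and so the clean local argument of the base case is no longer available. I expect to have to argue directly that the normal closure $C=\langle y^B\rangle$ is locally nilpotent, by combining two facts: that $CA/A$ is locally nilpotent, being the normal closure of an Engel element inside the locally nilpotent group $B/A$, and that $C\cap A$ is locally soluble and carries the Engel action of $y$, so that Gruenberg's theorem can be brought to bear inside the soluble finitely generated subgroups of $A$ on which $y$ operates.

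Making this interaction between the locally soluble bottom $C\cap A$ and the locally nilpotent top $CA/A$ precise, so as to conclude local nilpotency of $C$, is the main obstacle; everything else in the argument is a transfinite bookkeeping of the series together with the two soft facts that images and restrictions of left Engel elements are again left Engel, and that the Hirsch--Plotkin radical of a normal subgroup is a normal locally nilpotent subgroup of the whole group.
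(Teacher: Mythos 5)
The paper gives no proof for you to be measured against: Lemma \ref{asce} is imported verbatim from \cite{BSTT}, where it rests on the classical Gruenberg--Plotkin theory of Engel elements (see Robinson, \emph{Finiteness Conditions and Generalized Soluble Groups}, Part~2, Section~7.3). Judged on its own merits, your proposal is not yet a proof. The easy inclusion, the base case (Gruenberg applied to the soluble subgroups $\langle y,g_1,\dots,g_k\rangle$ of a locally soluble group), and the limit-ordinal step are all correct. But the successor step is where the entire content of the theorem lives, and you leave it open by your own admission: one must show that if $A\trianglelefteq B$ with $A$ locally soluble, $B/A$ locally nilpotent, and $y$ an Engel element of $B$, then $y\in HP(B)$. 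Your suggested route---bringing Gruenberg's theorem to bear inside ``soluble finitely generated subgroups''---fails for exactly the reason you flag: a finitely generated $F\le\langle y^B\rangle$ has $F\cap A$ possibly infinitely generated, hence only locally soluble rather than soluble, so $F$ need not be soluble and Gruenberg's theorem does not apply to it; nor is there an evident finitely generated $y$-invariant soluble subgroup to retreat to, since the Engel exponents of $y$ on elements of $A$ are unbounded. In the soluble case this step \emph{is} Gruenberg's theorem (the delicate part of his argument is precisely an Engel element over an abelian normal subgroup); with a locally soluble bottom it is, if anything, harder. Conceding it as ``the main obstacle'' means the proof is missing its core.

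There is also a concrete error in the transfinite bookkeeping: factoring out the bottom term does not shorten an infinite ascending series. If the series has length $\kappa=\lambda+1$ with $\lambda$ infinite, say $\kappa=\omega+1$, then the induced series $1=G_1/A\le G_2/A\le\cdots\le G_\omega/A\le G/A$ re-indexes to a series of $G/A$ again of length $\omega+1=\kappa$, so the assertion ``the quotient $G/A$ carries a shorter series'' is false and the appeal to the inductive hypothesis for $G/A$ is unjustified. Peeling off the top term $G_\lambda$ instead does shorten the series, but then $y$ need not lie in $G_\lambda$; applying the base case to the locally soluble quotient $G/G_\lambda$ and passing to the preimage of $HP(G/G_\lambda)$ lands you in the same unproved extension problem (now with a locally nilpotent top over the shorter-series bottom). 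So both the reduction and its target need to be replaced; the efficient repair is to invoke the results this lemma actually rests on, namely Gruenberg's lemma on Engel elements over abelian or locally nilpotent normal subgroups and Plotkin's theorem for groups with ascending series with locally soluble factors, which is in effect what \cite{BSTT} does.
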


\begin{proposition} \label{prop}
Let $k,m,n$ be positive integers. Suppose that $G$ is a residually-$\mathcal{N}$ group in which for every $\delta_k$-value $x$ there exists a positive integer $q=q(x) \leqslant m$ such that $x^q$ is $n$-Engel.  Then the Hirsch-Plotkin radical $HP(G)$ is precisely the set of Engel elements of $G$. 
\end{proposition}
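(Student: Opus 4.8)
The plan is to produce, for the whole group $G$, an ascending normal series whose factors are locally soluble, and then read off the conclusion from Lemma \ref{asce}. Since every element of $HP(G)$ is always Engel, only the reverse inclusion needs proof, and Lemma \ref{asce} delivers exactly that once such a series is available. I would obtain the series from the single claim that $G^{(k)}$ is locally nilpotent: granting this, the finite normal series
\[
1\leqslant G^{(k)}\leqslant G^{(k-1)}\leqslant\cdots\leqslant G'\leqslant G
\]
has locally nilpotent bottom factor $G^{(k)}$ and abelian factors $G^{(i)}/G^{(i+1)}$ above it, so all factors are locally soluble and Lemma \ref{asce} applies, giving that the set of Engel elements equals $HP(G)$.

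Everything therefore reduces to proving that $G^{(k)}$ is locally nilpotent. A finitely generated subgroup of $G^{(k)}$ lies in $K=\langle g_1,\dots,g_s\rangle$ for finitely many $\delta_k$-values $g_i$, so it suffices to show each such $K$ is nilpotent; note that $K$ is again residually-$\mathcal{N}$. The key structural observation is that the set of $\delta_k$-values is a normal, symmetric and commutator-closed subset of $G$ (commutator-closedness follows by an easy induction on $k$ from the recursive definition of $\delta_k$), so that every iterated group commutator in the $g_i$ is itself a $\delta_k$-value, whence the power-Engel hypothesis is inherited by all such commutators and passes to quotients. By Wilson's Lemma \ref{Wilson} it is then enough to prove that $H:=K/R_p$ is nilpotent for every prime $p$.

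For a fixed $p$ I would study the Lie algebra $L=L_p(H)$. It is generated by the finitely many images of the $g_i$, and any Lie commutator in these generators coincides with $\widetilde c$ for a group commutator $c$ of the $g_i$; by commutator-closedness $c$ is a $\delta_k$-value, so some $c^{q}$ with $q\leqslant m$ is $n$-Engel and hence $\widetilde{c^{q}}$ is ad-nilpotent of index at most $n$. Lemma \ref{4} then gives $(ad\,\tilde c)^{qn}=(ad\,\widetilde{c^{q}})^{n}=0$, so every commutator in the generators of $L$ is ad-nilpotent of bounded index. To invoke Zelmanov's Theorem \ref{1} it remains to see that $L$ is PI, and for this I need $G$ to satisfy a group law (Lemma \ref{2}). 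Here the exponent $q=q(x)$ varies with $x$, so I would first uniformize it: using that $G$ is residually nilpotent, I would show that a suitable power of an $n$-Engel element is again Engel of bounded index, so that with $M=\mathrm{lcm}(1,\dots,m)$ every $\delta_k$-value $x$ has $x^{M}$ Engel of some index $e=e(m,n)$. Then $G$ obeys the law $[y,{}_{e}\,\delta_k(x_1,\dots,x_{2^k})^{M}]\equiv 1$, so $L$ is PI and Zelmanov's theorem yields that $L_p(H)$ is nilpotent.

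Finally I would descend from the Lie algebra back to the group: by Lazard's Theorem \ref{3} the nilpotency of $L_p(H)$ makes the pro-$p$ completion of $H$ $p$-adic analytic, and combining this with the Engel law above and the residual nilpotence of $H$ forces $H$ to be nilpotent; Wilson's lemma then gives $K$ nilpotent, completing the proof. I expect two points to carry the real difficulty. The first is the uniformization of the exponent needed to manufacture a genuine group law for the PI step, since the hypothesis only supplies a bounded but element-dependent $q$; it is the residual nilpotence of $G$ that makes this possible. The second, and main, obstacle is the passage from the nilpotency of $L_p(H)$ to the nilpotency of $H$ itself: $p$-adic analyticity alone yields only virtual nilpotency, and it is the interaction of the Engel law with linearity and with residual nilpotence that must be exploited to remove the word ``virtually.''
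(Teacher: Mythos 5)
The decisive step in your plan---that $G^{(k)}$ is locally nilpotent, i.e.\ that every subgroup $K$ generated by finitely many $\delta_k$-values is nilpotent (equivalently, after Lemma \ref{Wilson}, that each $K/R_p$ is nilpotent)---is false under the hypotheses, so the ``main obstacle'' you flag at the end is not a technical difficulty to be overcome but a genuine impossibility. Take $D_\infty=\langle a,b\mid b^2=1,\ a^b=a^{-1}\rangle$ and $G=D_\infty\wr C_2=(D_\infty\times D_\infty)\rtimes\langle\sigma\rangle$, where $\sigma$ swaps the coordinates. The subgroups $\langle a^{2^j}\rangle\times\langle a^{2^j}\rangle$ are normal in $G$, intersect trivially, and the quotients $(D_\infty/\langle a^{2^j}\rangle)\wr C_2$ are finite $2$-groups, so $G$ is residually-$\mathcal{N}$. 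Every commutator of $G$ lies in the base $D_\infty\times D_\infty$; a direct computation shows that every element of $\langle a\rangle\times\langle a\rangle$ is $2$-Engel in $G$, and every element $x$ of the base satisfies $x^2\in\langle a\rangle\times\langle a\rangle$. Hence the hypothesis of the Proposition holds with $k=1$ and $m=n=2$. Nevertheless, the two $\delta_1$-values $u=[(a,1),\sigma]=(a^{-1},a)$ and $t=[(b,1),\sigma]=(b,b)$ satisfy $t^2=1$ and $u^t=u^{-1}$, so $\langle u,t\rangle\cong D_\infty$ is a subgroup of $G'$ generated by two $\delta_1$-values which is not nilpotent; it is moreover residually-$2$, so $\langle u,t\rangle/R_2$ is not nilpotent either. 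Thus $G^{(k)}$ need not be locally nilpotent, only locally \emph{soluble}, and that weaker statement is exactly what the paper proves and all that Lemma \ref{asce} requires, since the derived series of $G$ supplies the abelian factors above $G^{(k)}$. Correspondingly, the paper runs the Lie-algebra argument not on subgroups generated by $\delta_k$-values themselves, but on subgroups generated by finitely many conjugates of a fixed power $x^{q}$, which are $n$-Engel elements; and its endgame is Tits' alternative: the subgroup is virtually soluble, and virtual solubility together with residual-$p$-ness gives solubility (a residually nilpotent group has no nontrivial perfect subgroup)---nilpotency is never claimed, and, as the example shows, is not available.

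There is a second, independent gap: your uniformization step for the PI hypothesis. You propose to show that if some $x^{q}$ with $q\leqslant m$ is $n$-Engel, then $x^{M}$ with $M=\mathrm{lcm}(1,\dots,m)$ is Engel of index bounded in terms of $m$ and $n$. No such result is available: it is not even known that a power of a left Engel element is again left Engel (this is close to the open problem of whether the left Engel elements form a subgroup), and residual nilpotence of $G$ gives no visible help before one knows the relevant subgroups are soluble. The paper sidesteps the issue entirely: since for every value $x=\delta_k(a_1,\dots,a_{2^k})$ some $x^{q}$ with $q\leqslant m$ is $n$-Engel, the group satisfies the law
$$[y,{}_{n}\,x,\ {}_{n}\,x^2,\ \dots,\ {}_{n}\,x^m]\equiv 1,\qquad x=\delta_k(a_1,\dots,a_{2^k}),$$
verbatim: once the segment $[\,\cdot\,,{}_{n}\,x^{q}]$ is reached, the whole left-normed commutator collapses to $1$. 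This law is what feeds Lemma \ref{2} and then Theorem \ref{1}. You should adopt that trick in place of the uniformization; but even with it, your route fails at the first gap, because its target statement is false.
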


\begin{proof}
By Lemma \ref{asce}, it is sufficient to show that finitely generated subgroups of $\delta_k(G)$ is locally soluble. 

Choose arbitrary $\delta_k$-value $x \in G$ and $q=q(x)$ a positive integer such that $q \leqslant m$ and the element $x^q$ is $n$-Engel. We will prove that the normal closure of $ x^q$ in $G$, $\langle (x^q)^h \ \vert \ h \in G  \rangle$, is soluble. Let $b_1, \ldots,b_t$ be finitely many elements in $G$. Let $h_i = (x^q)^{b_i}$, $i=1, \ldots,t$ and $H = \langle h_1, \ldots, h_t\rangle$. We will prove that $H$ is soluble. As a consequence of Lemma \ref{Wilson}, we can assume that $G$ is residually-$p$ for some prime $p$. Let $L=L_p(H)$ be the Lie algebra associated with the Zassenhaus-Jennings-Lazard series $$H=D_1\geq D_2\geq \cdots$$ of $H$. Then $L$ is generated by $\tilde{h}_i=h_i D_2$, $i=1,2,\dots,t$. Let $\tilde{h}$ be any Lie-commutator in $\tilde{h}_i$ and $h$ be the group-commutator in $h_i$ having the same system of brackets as $\tilde{h}$. Since for any group commutator $h$ in $h_1\dots,h_t$ there exists a positive integer $q \leqslant m$ such that $h^q$ is $n$-Engel, Lemma \ref{4} shows that any Lie commutator in $\tilde h_1\dots,\tilde h_t$ is ad-nilpotent. On the other hand, for every $\delta_k$-value $x = \delta_k(a_1, \ldots, a_{2^k})$ there exists a positive integer $q = q(x) \leqslant m$ such that $x^q$ is $n$-Engel, then $G$ satisfies the identity
$$[y, {}_{n}\delta_k(a_1, \ldots, a_{2^k}), {}_{n} \delta_k(a_1, \ldots, a_{2^k})^2, \ldots, {}_{n} \delta_k(a_1, \ldots, a_{2^k})^m]\equiv 1$$
and therefore, by Lemma \ref{2}, $L$ satisfies some non-trivial polynomial identity. Now Zelmanov's Theorem \ref{1} implies that $L$ is nilpotent. Let $\hat{H}$ denote the pro-$p$ completion of $H$. Then $L_p(\hat{H})=L$ is nilpotent and $\hat{H}$ is $p$-adic analytic group by Theorem \ref{3}. Clearly $H$ cannot have a free subgroup of rank 2 and so, by Tits' Alternative \cite{tits}, $H$ is virtually soluble. As $H$ is residually-$p$ we have $H$ is soluble. Since $h_1,\dots,h_t$ have been chosen arbitrarily, we now conclude that $\delta_k(G)$ is locally soluble, which completes the proof.
\end{proof}

The following result is a consequence of Restricted Burnside Problem (see for instance \cite[Theorem 1]{Ma}).

\begin{lemma}\label{loc.finite}
Let $G$ be a locally graded group of finite exponent. Then $G$ is locally finite.   
\end{lemma}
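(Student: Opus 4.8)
The plan is to reduce the statement to Zelmanov's solution of the Restricted Burnside Problem, which guarantees that every residually finite group of finite exponent is locally finite (\cite{ze1,ze2}). Since local finiteness is a local property, it suffices to prove that every finitely generated subgroup $H$ of $G$ is finite. Being a subgroup of a locally graded group, such an $H$ is itself locally graded, and it inherits the finite exponent from $G$; I would therefore fix one finitely generated subgroup $H$ and argue with it throughout.

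First I would introduce the finite residual $R$ of $H$, that is, the intersection of all subgroups of finite index in $H$. The quotient $H/R$ is residually finite and of finite exponent, hence locally finite by Zelmanov's theorem; being also finitely generated, $H/R$ is in fact finite. Consequently $R$ has finite index in $H$, and therefore $R$ is itself finitely generated.

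The heart of the argument is to show that $R$ is trivial, and this is precisely where local gradedness enters. Suppose $R\neq 1$. As a subgroup of a locally graded group, $R$ is locally graded, so the nontrivial finitely generated group $R$ possesses a proper subgroup $S$ of finite index. Then $S$ has finite index in $H$, so its normal core $S_H=\bigcap_{h\in H}S^{h}$, being the intersection of finitely many finite-index conjugates, is a normal subgroup of finite index in $H$. By the very definition of the finite residual we obtain $R\leqslant S_H\leqslant S\subsetneq R$, a contradiction. Hence $R=1$, and so $H=H/R$ is finite, as required.

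The main obstacle here is conceptual rather than computational: one must set up the correct reduction to a residually finite quotient so that Zelmanov's theorem can be invoked, and then exploit the local gradedness of the finite residual to force it to be trivial. This is exactly the finite-residual technique already used in the proof of Lemma \ref{grad}, and once these pieces are assembled no further calculation is needed.
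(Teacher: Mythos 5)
Your proof is correct. Note, however, that the paper does not prove this lemma at all: it simply states it as "a consequence of the Restricted Burnside Problem" and cites Macedo\`nska \cite[Theorem 1]{Ma}. So where the paper defers to a reference, you have supplied a complete argument, and it is the standard one: pass to a finitely generated subgroup $H$, quotient by the finite residual $R$ to obtain a finitely generated residually finite group of finite exponent, invoke Zelmanov's theorem to conclude $H/R$ is finite, deduce that $R$ is finitely generated, and then use local gradedness of $R$ to force $R=1$. This is exactly the finite-residual technique the paper itself employs in the proof of Lemma \ref{grad}, so your argument also fits naturally with the paper's methods. One small simplification: since you defined $R$ as the intersection of \emph{all} finite-index subgroups of $H$ (not just the normal ones), the passage through the normal core $S_H$ is redundant --- the proper finite-index subgroup $S\leqslant R$ already has finite index in $H$, so $R\leqslant S\subsetneq R$ is an immediate contradiction. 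The core argument is needed only if one defines $R$ via normal subgroups of finite index; either way the proof stands.
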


We are now in a position to prove Theorem A.

\begin{proof}[Proof of Theorem A] Recall that $G$ is a residually finite group in which for every element $x\in G$ there exists a number $q=q(x) \leqslant m$ such that $x^q$ is $n$-Engel. We need to show that finitely generated subgroups of $G$ are virtually nilpotent.

Choose arbitrarily finitely many elements $h_1,\dots,h_s \in G$ and set $H=\langle h_1,\dots,h_d\rangle$. Let $K$ be the subgroup of $H$ generated by all Engel elements in $G$ contained in $H$. Now, it is sufficient to prove that $H/K$ is finite and $K$ is nilpotent. Since $K$ is generated by Engel elements in $G$, it follows that $K$ is locally nilpotent (Proposition \ref{prop}). According to Lemma \ref{HP} the quotient $H/K$ is locally graded. As $H/K$ is a locally graded of finite exponent we have $H/K$ is finite (Lemma \ref{loc.finite}). In particular, $K$ is finitely generated, which completes the proof. 
\end{proof}

The proof of Theorem B is now easy. 

\begin{proof} [Proof of Theorem B] Recall that $G$ is a residually finite group in which for any product of at most $896$ $w$-values $x$ there exists a positive integer $q=q(x)$ dividing $m$ such that $x^q$ is $n$-Engel. We need to prove that finitely generated subgroups of $w(G)$ are virtually nilpotent.

Let $W$ be a finitely generated subgroup of $w(G)$. Clearly, there exist finitely many $w$-values $a_1,\dots,a_s$ such that $W\leq\langle a_1,\dots,a_s\rangle$. Set $H=\langle a_1,\dots,a_s \rangle$ and $K$ be the subgroup generated by all Engel elements in $G$ contained in $H$. Arguing as in the proof of Theorem A we deduce that $K$ is locally nilpotent and $H/K$ is locally graded. It suffices to prove that the quotient $H/K$ is finite. Since $x^m \in K$ for every $w$-value $x$, it follows that the quotient $w(G)/HP(w(G))$ is locally finite (Proposition \ref{graded}). From this we deduce that $H/K$ is finite. In particular, $K$ is finitely generated. The theorem follows.    
\end{proof}

\end{document}